\pgfplotsset{compat=newest}
    \pgfplotsset{
        compat=1.7,
        my ybar legend/.style={
            legend image code/.code={
                \draw [##1] (0cm,-0.6ex) rectangle +(2em,1.5ex);
            },
        },
    }
\def\keywordname{{\bfseries Keywords}}%
\def\keywords#1{\par\addvspace\medskipamount{\rightskip=0pt plus1cm
\def\and{\ifhmode\unskip\nobreak\fi\ $\cdot$
}\noindent\keywordname\enspace\ignorespaces#1\par}}
\def\subclassname{{\bfseries Mathematics Subject Classification
(2000)}\enspace}
\def\subclass#1{\par\addvspace\medskipamount{\rightskip=0pt plus1cm
\def\and{\ifhmode\unskip\nobreak\fi\ $\cdot$
}\noindent\subclassname\ignorespaces#1\par}}
\theoremstyle{plain}
\newtheorem{thm}{Theorem}[section]
\newtheorem{obs}[thm]{Observation}
\newtheorem{rmk}[thm]{Remark}
\newcommand{\RR}{{\mathbb{R}}}
\newcommand{\CC}{{\mathbb{C}}}
\newcommand{\AAA}{\bm{A}}
\newcommand{\BBB}{\bm{B}}
\newcommand{\CCC}{\bm{C}}
\newcommand{\SSS}{\bm{S}}
\newcommand{\OOO}{\mathcal{O}}
\newcommand{\xxx}{\bm{x}}
\newcommand{\qqq}{\bm{q}}
\newcommand{\eee}{\bm{e}}
\DeclareMathOperator{\vecc}{vec}
\DeclareMathOperator{\Err}{Err}
\DeclareMathOperator{\rk}{rank}
\DeclareMathOperator{\diag}{diag}
\DeclareMathOperator{\sspan}{span}
\title{The infinite Lanczos method for symmetric nonlinear eigenvalue problems}
\author{Giampaolo Mele
\thanks{Department of Mathematics, KTH Royal Institute of Technology, SeRC swedish e-science research center, Lindstedtsv\"agen 25, SE-100 44 Stockholm, Sweden, email: gmele,@kth.se}
}
\date{\today}
\begin{document}

\maketitle

\begin{abstract}
A new iterative method for solving large scale symmetric nonlinear eigenvalue problems is presented. We firstly derive an infinite dimensional symmetric linearization of the nonlinear eigenvalue problem, then we apply the indefinite Lanczos method to this specific linearization, resulting in a short-term recurrence. We show how, under specific assumption on the starting vector, this method can be carried out in finite arithmetic and how the exploitation of the problem structure leads to improvements in terms of computation time. The eigenpair approximations are extracted with the nonlinear Rayleigh–Ritz procedure combined with a specific choice of the projection space. We illustrate how this extraction technique resolves the instability issues that may occur due to the loss of orthogonality in many standard Lanczos-type methods.

\keywords{nonlinear eigenvalue problem \and symmetric \and Lanczos}
\subclass{35P30 \and 65H17 \and 65F60 \and 15A18 \and 65F15}
\end{abstract}

\section{Introduction}

We consider the nonlinear eigenvalue problem (NEP) which consists of computing $(\lambda, v) \in D \times \CC^n \setminus \left \{0 \right \}$ such that
\begin{align} \label{eq:nep}
M(\lambda) x = 0,
\end{align}
where
\begin{align} \label{eq:spmf}
 M(\lambda) = \sum_{m=1}^p f_m(\lambda) A_m,
\end{align}
with $D \subset \CC$ open disk, $f_m: D \rightarrow \CC$ analytic functions, and $A_m \in \CC^{n \times n}$ for $m=1, \dots, p$. In this work we focus on the symmetric NEP, namely we assume that 
\begin{align} \label{eq:Am_symm}
&& M(\lambda)^T=M(\lambda)	&& \forall \lambda \in D.
\end{align}
Equivalently, $M(\lambda)$ can be expressed as~\eqref{eq:spmf} with symmetric matrices $A_m^T = A_m$ for $m=1, \dots, p$. Notice that the complex matrices $A_m$ and $M(\lambda)$ are assumed to be symmetric but not necessarily Hermitian. The NEP arises in many areas such as: stability analysis, control theory, wave propagation, etc, and it has been studied in various settings. See the review papers \cite{guttel2017nonlinear,mehrmann2004nonlinear}, the PhD theses \cite{effenberger2013robust,van2015rational}, and the problem collection \cite{betcke2013nlevp}. Specialized software for NEPs has been recently produced: the package NEP-PACK~\cite{NEP-PACK}, the library SLEPc~\cite{Hernandez:2005:SSF}, and even more open-source software. 
An approach for solving the NEP consists of constructing a linear eigenvalue problem (linearization) whose eigenvalues approximate, or correspond to, the eigenvalues of the original NEP~\cite{amiraslani2008linearization,lawrence2016backward,antoniou2004new,mackey2006vector,gohberg2005matrix,de2010fiedler,robol2017framework}. When the NEP has specific structures such as being: symmetric, Hermitian, Hamiltonian, palindromic, etc, it is preferable to construct a linearization that preserves these structures. Theoretical and algorithmic aspects of structured linearizations have been extensively analyzed \cite{mackey2006palindromic,su2011solving,mehrmann2002polynomial,mackey2009numerical,bueno2014structured,bueno2014structured2,fassbender2018block}. In particular, it has been shown that methods based on structure preserving linearizations, in certain applications, are more robust than other methods that do not take into account the structure~\cite{mehrmann2001structure,mackey2006structured}. For the polynomial eigenvalue problem (PEP), i.e., the special case where $f_m(\lambda)$ in~\eqref{eq:nep} are polynomials, 
symmetric linearizations are extensively characterized in~\cite{higham2006symmetric,bueno2018block}. A well established class of methods for solving symmetric eigenvalue problems are Lanczos-like methods.
More precisely, the Lanczos method, and its variants, can be applied for solving symmetric generalized eigenvalue problems $A x = \lambda B x$ where $A,B \in\CC^{n \times n}$ are symmetric matrices. The original approach~\cite{Lanczos:1950zz} was developed for the case $B=I$, a generalization for $B$ positive definite is presented in \cite[Ch.15, Sec.11]{parlett1998symmetric}. A further extension of this approach, known as indefinite Lanczos method, for the case where $A$ and $B$ are Hermitian or symmetric matrices is discussed in~\cite{parlett1990use} and~\cite[Section 8.6.1]{bai2000templates}. Lanczos methods belong to the class of Krylov methods. They exploit the fact that, during the construction of the orthogonal basis of the Krylov space, due to the symmetry of the problem, the orthogonalization can be performed in a more efficient way with a three-term recurrence. The main disadvantage of having a three-term recurrence is that, in floating-point arithmetic and without further specializations, the basis vectors are often affected by loss of orthogonality, resulting in slow convergence of the Ritz pairs and numerical instability~\cite{parlett1979lanczos,simon1984lanczos,wu2000thick,abdel2010deflated}.

In this work, we present a new symmetric linearization for the symmetric NEP, resulting in a symmetric, linear, and infinite dimensional eigenvalue problem. Symmetric generalized eigenvalue problems can be solved with the indefinite Lanczos method~\cite[Section 8.6.1]{bai2000templates}. We present a new method that corresponds to adapting, in an efficient and robust way, the indefinite Lanczos method to the derived linearization. In order to cure the slow convergence, that is due to the loss of orthogonality, we use the nonlinear Rayleigh–Ritz procedure, combined with a proper choice of the projection space, for extracting eigenpair approximations by exploiting the structure of the derived linearization. The proposed method is competitive, in terms of robustness and complexity, with Arnoldi-like  methods for NEPs that perform the full orthogonalization.

The paper is organized as follows: in Section~\ref{sec:derivation} we prove that the symmetric NEP is equivalent to a 
symmetric, linear, and infinite dimensional eigenvalue problem. In Section~\ref{sec:Lanc_finite} we derive a method, in finite arithmetic, which consists of applying the indefinite Lanczos method to the derived linearization. In Section~\ref{sec:Z_comp} we show how the computation time of the resulting method can be considerably reduced by exploiting additional NEP structures. In Section~\ref{sec:num_exp} we illustrate the performance of this new approach with numerical simulations by solving large and sparse NEPs. 
The simulations were carried out in the Julia programming language~\cite{bezanson2017julia} with NEP-PACK~\cite{NEP-PACK}, which is an open source Julia package for NEPs.

The method we derive can been seen as an Arnoldi-like method applied to an iteratively expanding linearization, or equivalently to a infinite dimensional linear operator. Other methods that are based on these ideas  are: infinite Arnoldi~\cite{Jarlebring_INFARN_2012} and its tensor variant~\cite{jarlebring2017waveguide}, NLEIGS~\cite{guttel2014nleigs}, and CORK~\cite{van2015compact}. There are also methods based on the bi-orthogonalization procedure, which also lead to a three-term recurrence, presented in \cite{lietaert2018compact,gaaf2017infinite}. However, these methods and their variations, in the way they are presented and without further research, are not capable of taking advantage of the symmetry of the NEP. 

In the rest of this work, vectors and matrices are denoted as $v = [v_i]_{i=1}^{n}$ and $A=[a_{i,j}]_{i,j=1}^{n,m}$ respectively, whereas bold letters represent block-vectors and block-matrices with infinite size, namely $\bm v = [v_i]_{i=1}^{\infty}$ with $v_i \in \CC^n$ and $\bm A=[A_{i,j}]_{i,j=1}^{\infty}$ with $A_{i,j} \in \CC^{n \times n}$. The matrix $[\bm A]_k = [A_{i,j}]_{i,j=1}^{k} \in \CC^{n k \times n k}$ consists of the main sub-matrix obtained by extracting the first $k$-blocks. The Kronecker product and the Hadamard (element-wise) product are denoted by $\otimes$ and $\circ$ respectively. The vectors $e_j$ and $\bm e_j$ have zeros as elements except one in the $j$-th position whereas $e$ and $\bm e$ are the vectors with all ones. Without loss of generality, after a change of variables in  \eqref{eq:spmf}, we assume that the region of interest $D \subset \CC$ is a disk centered in the origin. The derivatives of \eqref{eq:spmf} will be denoted as $M_i:=M^{(i)}(0)$. We will denote by $A^T$ the transpose (not conjugate transpose) of the matrix $A \in \CC^{n \times n}$.

\section{Indefinite Lanczos method in infinite dimensional settings} \label{sec:derivation}
In order to derive a symmetric linearization for NEPs, we first review a specific linearization technique for PEPs. This technique consists of symmetrizing the companion linearization and it is presented in \cite[Theorem 2.1]{mehrmann2002polynomial} that we recall below. The approach is previously reported in \cite[Ch.4 Sec. 2]{lancaster1966lambda} for the scalar case.

\begin{thm}[Mehrmann and Watkins \cite{mehrmann2002polynomial}, Lancaster \cite{lancaster1966lambda}]\label{thm:PEPsym}
Consider the polynomial eigenvalue problem $M(\lambda)v=0$ where $M(\lambda)=\sum_{j=0}^k M_j \lambda^j$ and $M_k$ nonsingular. Then, the pencil $A-\lambda B \in \CC^{nk \times nk}$, where 
\setlength\arraycolsep{3.3pt}
\begin{align} \label{eq:PEP_sym_pencil}
 A = 
 \begin{pmatrix}
  -M_0 	&	0	&	0	&	0	& \dots	&	0	\\
  0	&	M_2	&	M_3	&	M_4	& \dots &	M_k	\\
  0	&	M_3	&	M_4	&		&	&	0	\\
  0	&	M_4	&		&		&	&	0	\\
 \vdots	&	\vdots	&		&		&	&	\vdots  \\
  0	&	M_k	&	0	&	0	& \dots	&	0	\\
 \end{pmatrix}, &&
  B = 
 \begin{pmatrix}
  M_1 	&	M_2	&	M_3	&	\dots	& M_{k-1}	&	M_k	\\
  M_2	&	M_3	&	M_4	&		& M_k	 	&	0	\\
  M_3	&	M_4	&		&		&		&	0	\\
  M_4	&		&		&		&		&	0	\\
 \vdots	&		&		&		&		&	\vdots  \\
  M_k	&	0	&	0	&	0	& \dots		&	0	\\
 \end{pmatrix}
\end{align}
has the same eigenvalues of $M(\lambda)$. If $M_j$ are symmetric, i.e., $M(\lambda)$ is symmetric, then $A$ and $B$ are symmetric. If $M(\lambda) v = 0 $, then 
$[v^T, \lambda v^T, \dots, \lambda^{k-1} v^T]^T$ is an eigenvector of $A-\lambda B$.
\end{thm}

The proof is based on the following argument. A pair $(\lambda,x)$ that fulfill $M(\lambda)x=0$ defines an eigenpair of the companion linearization~\cite{gohberg2005matrix}, namely
\setlength\arraycolsep{3.3pt}
\begin{align}\label{eq:PEP_companion}
 \begin{pmatrix}
  -M_0 	&		&		& 		&		\\
  	&	I	&		& 		&		\\
  	&		&	I	&		&		\\
 	&		&		& \ddots 	&		\\
  	&		&		& 		&	I	\\
 \end{pmatrix}
\begin{pmatrix}
 x 		\\
 \lambda x 	\\
 \lambda^2 x 	\\
 \vdots 		\\
 \lambda^{k-1} x
\end{pmatrix}
=  \lambda
 \begin{pmatrix}
  M_1 	&	M_2	&	\dots	&	M_{k-1}	&	M_k	\\
  I	&		&		&		&	0	\\
  	&	I	&		&		&	0	\\
 	&		&	 \ddots	&		&	\vdots  \\
 	&		&		&	I	&	0	\\
 \end{pmatrix}
 \begin{pmatrix}
 x 		\\
 \lambda x 	\\
 \lambda^2 x 	\\
 \vdots 	\\
 \lambda^{k-1} x
\end{pmatrix}.
\end{align}
To obtain~\eqref{eq:PEP_sym_pencil} we multiply~\eqref{eq:PEP_companion} on the left by the matrix 
\begin{align} \label{eq:PEP_symmetrizer}
S =
  \begin{pmatrix}
  I 	&	0	&	0	&	0	& \dots	&	0	\\
  0	&	M_2	&	M_3	&	M_4	& \dots &	M_k	\\
  0	&	M_3	&	M_4	&		&	&	0	\\
  0	&	M_4	&		&		&	&	0	\\
 \vdots	&	\vdots	&		&		&	&	\vdots  \\
  0	&	M_k	&	0	&	0	& \dots	&	0	\\
 \end{pmatrix}.
\end{align}

The main disadvantage of using this linearization in practice is that the blocks forming the eigenvectors of the pencil, defined by~\eqref{eq:PEP_sym_pencil}, grow or decay exponentially, depending on the value of $|\lambda|$. More precisely, the norm of the $j$-th block is $|\lambda|^j \| x \|$ and, if the PEP has high degree, this leads to overflow or underflow when the eigenpairs of~\eqref{eq:PEP_sym_pencil} are computed numerically. In order to resolve this issue, in this section we consider the scaled companion linearization presented \cite[Section 5.1]{Jarlebring_INFARN_2012}. We extend the ideas used in Theorem~\ref{thm:PEPsym} to symmetrize the scaled companion linearization. Moreover, we consider the NEP in its general form \eqref{eq:nep}, therefore we derive a linearization that involves vectors and matrices with infinite size.

\subsection{An infinite dimensional symmetric linearization}
The NEP \eqref{eq:nep} is equivalent to a linear and infinite dimensional eigenvalue problem, see \cite[Section 5.1]{Jarlebring_INFARN_2012}. More precisely, if $(\lambda,x)$ is an eigenpair of \eqref{eq:nep}, the following relation between vectors and matrices of infinite size is fulfilled
\renewcommand{\arraystretch}{1.3}
\begin{align} \label{eq:linNEP}
 \begin{pmatrix}
  -M_0	&		&		& 		&	       \\
        &	I	&		& 		&	       \\
        &		&	I	& 		&	       \\
        &		&		& 	I	&	       \\
        &		&		& 		& \ddots		
\end{pmatrix}
\begin{pmatrix}
 \frac{\lambda^0}{0!} x  \\
 \frac{\lambda^1}{1!} x  \\
 \frac{\lambda^2}{2!} x  \\
 \frac{\lambda^3}{3!} x  \\
 \vdots	 	
\end{pmatrix}
= \lambda
 \begin{pmatrix}
  M_1		&	\frac{1}{2} M_{2}		&	\frac{1}{3} M_{3}		& 	\frac{1}{4} M_4	& \dots		\\
  \frac{1}{1} I	&					&					& 			&		\\
		&	\frac{1}{2}I			&					& 			&		\\
		&					&	\frac{1}{3}I			&			&		\\
		&					&					&	\ddots		&		\\
\end{pmatrix}
\begin{pmatrix}
 \frac{\lambda^0}{0!} x  \\
 \frac{\lambda^1}{1!} x  \\
 \frac{\lambda^2}{2!} x  \\
 \frac{\lambda^3}{3!} x  \\ 
 \vdots	 	
\end{pmatrix}. 
\end{align}
\renewcommand{\arraystretch}{1}
The equation~\eqref{eq:linNEP} defines a linear and infinite dimensional eigenvalue problem 
\begin{align} \label{eq:nep_lin}
\AAA \xxx = \lambda \BBB \xxx,
\end{align}
where $\AAA, \BBB, \xxx$ are matrices and vector of infinite size defined accordingly. Clearly, the linearization~\eqref{eq:linNEP} is never symmetric. However, if the NEP is symmetric, i.e., it holds~\eqref{eq:Am_symm}, then it is possible to symmetrize~\eqref{eq:linNEP} with a similar technique as in Theorem~\ref{thm:PEPsym}. More precisely, since we consider a scaled and infinite companion linearization, in the following theorem we derive a scaled and infinite version of the matrix~\eqref{eq:PEP_symmetrizer} that symmetrizes~\eqref{eq:linNEP}. 

\begin{thm}[Symmetric linearization]
Assume that the NEP~\eqref{eq:spmf} is symmetric, i.e., it holds~\eqref{eq:Am_symm}, then there exists a unique matrix $\CCC$ such that
 \begin{align} \label{eq:symmetrizer}
 \SSS:= 
 \left[
 \begin{pmatrix}
	  1	&		\\
		&	\CCC	
 \end{pmatrix}
 \otimes
 \eee \eee^T 
 \right]
 \circ
  \begin{pmatrix}
   I	&			&			&		   	&       &       \\
	&	M_{2}		&	M_{3}		&	M_{4}	   	& M_{5} & \dots	\\
	&	M_{3}		&	M_{4}		&	M_{5}	   	&       &	\\
	&	M_{4}		&	M_{5}		&		   	&       &	\\
	&	M_{5} 	    	&			&		   	&       &       \\
	&   \vdots      	&               	&			&       &
  \end{pmatrix}
 \end{align}
 is a symmetrizer for \eqref{eq:linNEP}, namely 
 \begin{align} \label{eq:symm_lin_eig}
\SSS \AAA \xxx = \lambda \SSS \BBB \xxx
 \end{align}
 is a symmetric eigenvalue problem. The vector $\eee$ has infinite length with ones in all the entries. The coefficients of the matrix $\CCC$ fulfill the following relations
\begin{subequations} \label{eq:c_mat}
\begin{align}
 c_{i,1}	&=\frac{1}{i+1}             && i \geq 1,					\label{eq:ci1}				\\
 c_{i-1,j}	&=\frac{j}{i} c_{i,j-1}		&& i,j > 1.		            \label{eq:cij}
\end{align}
\end{subequations}
\end{thm}

\begin{proof}
We start observing that $M_j$, for $j \geq 0$, are symmetric matrices as consequence of~\eqref{eq:Am_symm}. The relations~\eqref{eq:c_mat} uniquely define a matrix $\CCC$ since the first column is fixed in~\eqref{eq:ci1} and the $j$-th column is computed by the $(j-1)$-th column in~\eqref{eq:cij}. We start by showing that the matrix $\CCC$ is symmetric. Let us consider $i>j$, namely $i=j+k$ for some positive integer $k$. By iteratively using \eqref{eq:cij} we obtain the relations
\begin{align*}
 (j+1) c_{j+k,j} 		= &(j+k) c_{j+k-1,j+1}				\\
 (j+2) c_{j+k-1,j+1}	 	= &(j+k-1) c_{j+k-2,j+2}			\\
 \cdots				= &	\cdots					\\
 (j+s) c_{j+k-s+1,j+s-1} 	= &(j+k-s+1) c_{j+k-s,j+s}			\\	
  \cdots				= &	\cdots				\\
 (j+k) c_{j+1,j+k-1} 		= &(j+1) c_{j,j+k},			
\end{align*}
that combined together give 
\begin{align*}
 c_{j+k,j}  = \frac{ (j+k) (j+k-1) \dots (j+1) }{(j+1) (j+2) \dots (j+k) } c_{j,j+k},
\end{align*}
that is, $c_{i,j}=c_{j,i}$. The case $i<j$ is analogous and we conclude that the matrix $\CCC$ is symmetric.

By multiplying~\eqref{eq:nep_lin} on the left by the matrix~\eqref{eq:symmetrizer} we get  
\renewcommand{\arraystretch}{1.3}
\begin{align}\label{eq:SA}
 \SSS \AAA= 
  \begin{pmatrix}
   -M_0	&			&			&		   	&       	&       \\
	&	c_{1,1} M_{2}	&	c_{1,2} M_{3}	&	c_{1,3} M_{4}	& c_{1,4} M_{5} & \dots	\\
	&	c_{2,1} M_{3}	&	c_{2,2} M_{4}	&	c_{2,3} M_{5}	&       	&	\\
	&	c_{3,1} M_{4}	&	c_{3,2} M_{5}	&		   	&       	&	\\
	&	c_{4,1} M_{5} 	&			&		   	&       	&       \\
	&   \vdots      	&               	&			&       	&
  \end{pmatrix}
 \end{align}
 \renewcommand{\arraystretch}{1}
and
\renewcommand{\arraystretch}{1.3}
\begin{align} \label{eq:SB}
\SSS \BBB = 
    \begin{pmatrix}
 \frac{1}{1}		M_{1}	&	\frac{1}{2}		M_{2}	& 	\frac{1}{3}		M_{3}	&	\frac{1}{4}	M_{4}		& \frac{1}{5}	M_{5} 	& \dots		\\
 \frac{c_{1,1}}{1}	M_{2}	&	\frac{c_{1,2}}{2} 	M_{3}	& 	\frac{c_{1,3}}{3} 	M_{4}	&	\frac{c_{1,4}}{4} M_{5}		& 			& 		\\
 \frac{c_{2,1}}{1}	M_{3}	&	\frac{c_{2,2}}{2}	M_{4}	& 	\frac{c_{2,3}}{3} 	M_{5}	&					& 			&		\\
 \frac{c_{3,1}}{1}	M_{4}	&	\frac{c_{3,2}}{2} 	M_{5}	& 					&					& 			&		\\
 \frac{c_{4,1}}{1} 	M_{5}	&					&					&					& 			&		\\
 \vdots				&					&					&					& 			&	
\end{pmatrix}.
\end{align} \renewcommand{\arraystretch}{1}
The matrix $\SSS \AAA$ is symmetric because $\CCC$ and $M_j$, for $j \geq 0$, are symmetric. By using~\eqref{eq:ci1} we get that the first block-row of $\SSS \BBB$ is equal to its first block column, whereas the equation~\eqref{eq:cij} and the symmetry of $\CCC$ gives the relation
\begin{align*}
 \frac{c_{i-1,j}}{j}=\frac{c_{j-1,i}}{i}=\frac{c_{i,j-1}}{i},
\end{align*}
which directly implies that the $(i,j)$-th and the $(j,i)$-th blocks of $\SSS \BBB$ are equal. Hence the matrix $\SSS \BBB$ is symmetric and~\eqref{eq:symm_lin_eig} is a symmetric eigenvalue problem.
\end{proof}

\begin{rmk}
The eigenvalue problems~\eqref{eq:linNEP} and~\eqref{eq:symm_lin_eig} have the same eigenpairs if the symmetrizer~\eqref{eq:symmetrizer} is nonsingular, namely $\SSS \xxx = 0$ only for $\xxx = 0$. In the next section we assume that $[\SSS]_{2N}$ is invertible for an $N$ large enough. This condition can be phrases in terms of solvability of a specific matrix equation as discussed in Observation~\ref{obs:S_singular}.

\end{rmk}

The method that we refer to as \emph{infinite Lanczos} consists of applying the indefinite Lanczos method (Algorithm~\ref{alg:IndefLanczosReduced}), described in the next section, to the symmetric eigenvalue problem~\eqref{eq:symm_lin_eig}.

\section{Infinite Lanczos method} \label{sec:Lanc_finite}

\subsection{Indefinite Lanczos method} \label{sec:LanFin}
Eigenpair approximations to the generalized eigenvalue problem $Ax=\lambda Bx$, with $A,B \in \CC^{n \times n}$ symmetric matrices, not necessarily Hermitian, and $A$ nonsingular, can be obtained by using the indefinite Lanczos method~\cite[Section 8.6.1]{bai2000templates} that is summarized in Algorithm~\ref{alg:IndefLanczosReduced}. The method consists of computing an orthogonal basis of the Krylov space 
\begin{align}\label{eq:Krylov_spave_def}
 \mathcal{K}_k(A^{-1} B, q_1) := \sspan \left(
 q_1, A^{-1} B q_1, (A^{-1} B)^2 q_1, \dots, (A^{-1} B)^{k-1} q_1 
 \right)
\end{align}
by using, instead of the (standard) Euclidean scalar product, the \emph{indefinite scalar product} defined by the matrix $B$, namely $x^T B y$ is the $B$-product between $x,y \in \CC^n$.
The fact that $A^{-1} B$ is self-adjoint, with respect to this indefinite scalar product, leads to the property that the $B$-orthogonal basis of the Krylov space can be computed with a three-term recurrence. In particular, at the $k$-th iteration of Algorithm~\ref{alg:IndefLanczosReduced}, the following relations are fulfilled
\begin{subequations} \label{eq:ArnFactFull}
\begin{align} 
 &A^{-1} B Q_k = Q_{k+1} T_{k+1,k},	                          	\label{eq:ArnFact}    \\
 &Q_{k+1}^T B Q_{k+1} = \Omega_{k+1},				    	\label{eq:B-orth}
\end{align}
\end{subequations}
where the diagonal matrix $\Omega_{k+1}:=\diag(\omega_1, \dots, \omega_{k+1})$ and the tridiagonal matrix  $T_{k+1,k}=[t_{i,j}]_{i,j=1}^{k+1}$ contain the orthogonalization and normalization coefficients. The matrix $Q_{k+1}$ is $B$-orthogonal in the sense of~\eqref{eq:B-orth} and its columns, generated with a three-term recurrence, span the Krylov space~\eqref{eq:Krylov_spave_def}. The Ritz pairs of~\eqref{eq:ArnFact}, defined as follows, 
\begin{align} \label{eq:RitzPairsFinite}
 (\lambda,Q_k z), \ \ \mbox{ where } \ \  T_{k} z = \lambda  \Omega_k z,
\end{align}
provide an approximation to the eigenpairs of the original problem.
Since the indefinite scalar product defined by $B$ is in general degenerate, there may be cases of break down in Algorithm~\ref{alg:IndefLanczosReduced}. We refer to \cite[Section 8.6.1]{bai2000templates} and reference therein for a detailed discussion of this issue.

\begin{algorithm} 
\caption{Indefinite Lanczos} \label{alg:IndefLanczosReduced}
\SetKwInOut{Input}{input}\SetKwInOut{Output}{output}
\Input{Matrices $A,B \in \RR^{n \times n}$ and starting vector $q_1 \in \RR^n$}
\Output{Eigenpair approximations}
\BlankLine
\nl Set $q_{0}=0$, $t_{0,1}=0$, $\omega_1=q_1^T B q_1$
\BlankLine
\For{$k = 1,2,\dots$}{
\nl $w=A^{-1} B q_k$	\label{step:AinvB}											\\
\nl $z=B w$		\label{step:zcomp}													\\
\nl $[\alpha, \beta, \gamma]=z^T [q_k, q_{k-1}, w]$ \label{step:orth_coeff}                 \\
\nl $t_{k,k}=\alpha/\omega_k, t_{k-1,k}=\beta/\omega_{k-1}$										\\
\nl $w_{\perp}=w-t_{k,k} q_k - t_{k-1,k} q_{k-1}$											\\
\nl $t_{k+1,k}=\| w_{\perp} \|$	
\\
\nl $q_{k+1}=w_{\perp}/t_{k+1,k}$
\\
\nl $\omega_{k+1}=(\gamma-2 t_{k,k} \alpha - 2 t_{k-1,k} \beta + t_{k,k}^2 \omega_k + t_{k-1,k}^2 \omega_{k-1} )/t_{k+1,k}^2$		\\
}
\nl Extract eigenpair approximations.
\end{algorithm}

\subsection{Infinite Lanczos method in finite arithmetic}

We now derive a method that consists of applying the indefinite Lanczos method (Algorithm~\ref{alg:IndefLanczosReduced}) to the symmetric eigenvalue problem  
\begin{align} \label{eq:sym_truncated}
 [\SSS \AAA]_{N} x = \lambda [\SSS \BBB]_{N} x
\end{align}
obtained by extracting the main block sub-matrices from~\eqref{eq:symm_lin_eig}, where $N$ is a non-fixed parameter greater than the number of iterations performed in Algorithm~\ref{alg:IndefLanczosReduced}. The method we derive is independent on $N$ and, under the assumption that $\SSS$ given in~\eqref{eq:symmetrizer} is invertibile, corresponds to apply Algorithm~\ref{alg:IndefLanczosReduced} directly to the linear infinite dimensional eigenvalue problem~\eqref{eq:symm_lin_eig} with a specific starting vector. This equivalence is formally presented in Theorem~\ref{thm:inf_dim_equiv} at the end of this section.

Algorithm~\ref{alg:IndefLanczosReduced} can be efficiently applied to~\eqref{eq:sym_truncated} by exploiting the structure of the matrices~\eqref{eq:sym_truncated}. We start with Step~\ref{step:AinvB} that can be performed as stated in the following result.

\begin{thm}[Action of \mbox{$ [\SSS \AAA]_N^{-1} [\SSS \BBB]_N $} ] \label{thm:action_op}
 Assume that $[\SSS]_{2N}$ is invertible, let $q_k \in \CC^{Nn}$ be such that only its first $k$ blocks are nonzero, corresponding to the columns of $Q_k:=[\tilde q_1, \dots, \tilde q_k]$. Then, only the first $k+1$ blocks of  $w=[\SSS \AAA]_N^{-1} [\SSS \BBB]_N q_k$ are nonzero, corresponding to the columns of $W:=[w_1, \dots, w_{k+1}]$ given by
\begin{align} \label{eq:W}
 W=w_1 e_1^T + Q_k D,
\end{align}
where $D \in \RR^{k \times (k+1)}$ is a diagonal matrix with coefficients $d_{j,j+1}=1/j$ and
\begin{align} \label{eq:w1}
 w_1=M_0^{-1} \sum_{j=1}^k \frac{M_{j}}{j} \tilde q_j.
\end{align}
\end{thm}
\begin{proof}
By using the specific structure of the matrices~\eqref{eq:SA}~and~\eqref{eq:SB}, the nonzero blocks of $w$ fulfill $\vecc(W) = ([\SSS \AAA]_N)^{-1} ([\SSS \BBB]_N) \vecc([Q_k, \ 0])$. We can then derive the following relations
\begin{align*}
 \begin{pmatrix} \vecc(W) \\ 0 \end{pmatrix}
 &= ([\SSS]_{2N} [\AAA]_{2N})^{-1} ([\SSS]_{2 N} [\BBB]_{2 N}) \begin{pmatrix} \vecc([Q_k, \ 0]) \\ 0 \end{pmatrix} \\
 &= ([\AAA]_{2N})^{-1} ([\BBB]_{2N}) \begin{pmatrix} \vecc([Q_k, \ 0]) \\ 0 \end{pmatrix}.
\end{align*}
Hence $\vecc(W) = ([\AAA]_{N})^{-1} ([\BBB]_{N}) \vecc([Q_k, \ 0])$, this directly implies \eqref{eq:W}~and~\eqref{eq:w1}, c.f., \cite[Section 4.2]{Jarlebring_INFARN_2012}. 
\end{proof}

By using the previous result, we conclude that in Algorithm~\ref{alg:IndefLanczosReduced}, if $q_1$ has only the first block which is nonzero, then $q_k$ at the $k$-th iteration will have $k$ nonzero blocks. This is due to the fact that, none of the steps, except Step~\ref{step:AinvB}, introduce fill-in in the vectors $q_1, q_2, \dots, q_k$. In Step~\ref{step:orth_coeff} of Algorithm~\ref{alg:IndefLanczosReduced} the products $z^T q_{k}$, $z^T q_{k-1}$ and $z^T w$ are computed. Observe that the vectors multiplied by $z$ have at most $k+1$ nonzero blocks. Therefore, even if $z=[\SSS \BBB]_N w$ is in general a full vector, only the first $k+1$ blocks are required. These blocks can be computed as follows.

\begin{thm}[Action of \mbox{$[\SSS \BBB]_N$}] \label{thm:SB_action}
Let us consider $z:=[\SSS \BBB]_N w$, where $w$ is given as in Theorem~\ref{thm:action_op}. Then the first $k+1$ blocks of $z$, corresponding to the columns of $Z:=[z_1, \dots, z_{k+1}]$, fulfill the relation
\begin{align} \label{eq:Z_naive}
 Z = \sum_{m=1}^p A_m W (G_{k+1} \circ F_m),  
\end{align}
where $G_{k+1} \in \RR^{(k+1) \times (k+1)}$ has coefficients $g_{j,1}=g_{1,j}=1/j$ for $j=1, \dots, k+1$ and $g_{i,j}=c_{i-1,j}/j$ and 
\begin{align*}
 F_m := 
    \begin{pmatrix}
 f_m^{(1)} (0)	    & f_m^{(2)}(0)	  & f_m^{(3)}(0)    & \dots    & f_m^{(k+1)}(0)	\\
 f_m^{(2)} (0)	    & f_m^{(3)}(0)	  & \dots		    & 	 	   & f_m^{(k+2)}(0)	\\
 f_m^{(3)} (0)	    &  \dots		  &		            &		   & f_m^{(k+3)}(0)	\\
  \vdots	        &     			  &			        &		   & \vdots		\\
 f_m^{(k+1)} (0)	& f_m^{(k+2)}(0)  & f_m^{(k+3)}(0)	& \dots	   & f_m^{(2k+1)}(0)
 \end{pmatrix}.
 \end{align*}
\end{thm}

\begin{proof}
 Since $w$ has only the first $k+1$ blocks that are nonzero, we can express 
\begin{align} \label{eq:zj}
 \vecc Z
  = [\SSS \BBB]_{k+1} 
 \vecc W.
 \end{align} 
 By using~\eqref{eq:SB} and that $M_j = \displaystyle \sum_{m=1}^p f_m^{(j)}(0) A_m$, we can decompose
 \begin{align} \label{eq:SB_k+1}
 [ \SSS \BBB]_{k+1} &= \sum_{m=1}^p (G_{k+1} \circ F_m) \otimes A_m.
 \end{align}
 Equation~\eqref{eq:Z_naive} follows by combining~\eqref{eq:SB_k+1}~and~\eqref{eq:zj}, and by using the properties of the Kronecker product.
\end{proof}

\begin{obs} \label{obs:matrix_scalar_product}
 The scalar product between the vectorization of two matrices can be carried out directly in matrix form by using the Hadamard product as follows: $(\vecc Z)^T \vecc W = \tilde e^T (Z \circ W) e$ with $\tilde e \in \RR^n$ and $e \in \RR^k$.
\end{obs}

\begin{obs} \label{obs:S_singular}
With the same reasoning as in Theorem~\ref{thm:SB_action}, we can decompose \eqref{eq:symmetrizer} as   
\begin{align*}
 [\SSS]_{2N} &= \sum_{m=1}^p \left[ 
 \begin{pmatrix}
  1 &               \\
    & [\CCC]_{2N-1}
 \end{pmatrix}
 \circ F_m \right] \otimes A_m.
 \end{align*}
Therefore, we can relate the invertibility of $[\SSS]_{2N}$ with the solvability of the following linear matrix equation 
\begin{align*}
 \sum_{m=1}^p A_m X \left[
  \begin{pmatrix}
  1 &               \\
    & [\CCC]_{2N-1}
 \end{pmatrix}
 \circ F_m \right] = B 
\end{align*}
for any $B \in \CC^{2N \times n}$. Linear matrix equations are extensively studied in recent literature. See the review paper~\cite{simoncini2016computational} and reference therein. In the numerical examples reported in Section~\ref{sec:num_exp} we never encounter a case when $[\SSS]_{2N}$ was singular. A case when this matrix is obviously singular is when the NEP is defined by polynomial functions. Although the theory does not cover this case, we have successfully applied the method we are deriving without introducing any breakdown or instability.
\end{obs}

Figure~\ref{Fig:illustration_alg} illustrates the structure of the matrices and vectors, involved in Algorithm~\ref{alg:IndefLanczosReduced}, when applied to~\eqref{eq:sym_truncated} with a starting vector that has only the first block which is nonzero. At iteration $k$ only the vectors $q_{k-1}, q_{k}$ are needed, therefore they are the only vectors that need to be stored. 
\begin{figure}[t]
\centering
\includegraphics{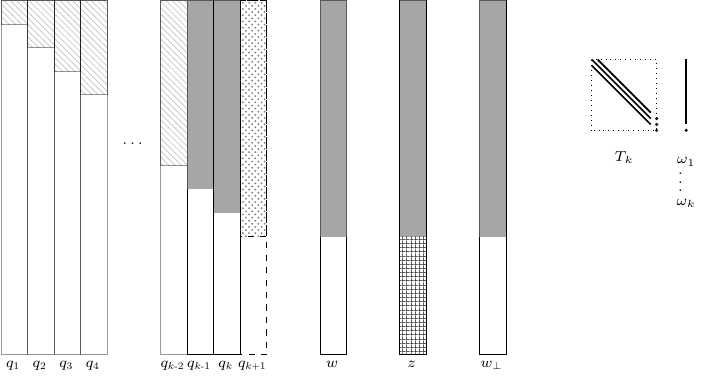}
\caption{Illustration of the structure of the matrices and vectors involved in Algorithm~\ref{alg:IndefLanczosReduced}, at iteration $k$, when applied to~\eqref{eq:sym_truncated}. The vectors $q_1, \dots, q_{k-2}$ in transparency are produced in the previous iterations and are not used after the $(k-1)$-iteration. The oblique lines pattern indicates the nonzero blocks of these vectors.
The lower part of the vector $z$ with grid pattern is not computed. The vector $q_{k+1}$ is computed at the end of the iteration and the dotted pattern indicates the nonzero blocks.}
\label{Fig:illustration_alg}
\end{figure}
\begin{algorithm} 
\caption{Infinite Lanczos} \label{alg:InfIndefLanczosReduced}
\SetKwInOut{Input}{input}\SetKwInOut{Output}{output}
\Input{NEP \eqref{eq:nep} and starting vector $Q_1 \in \RR^{n \times 1}$}
\Output{Eigenpair approximations}
\BlankLine
\nl Set $Q_{0}=0$, $t_{0,1}=0$, $\omega_1=Q_1^T M_1 Q_1$ 
\BlankLine
\For{$k = 1,2,\dots$}{
\nl Compute $w_1$ and $W=w_1 e_1^T + Q_k D$ as in \eqref{eq:W} and \eqref{eq:w1}	\label{step:W}				              \\
\nl Compute $Z$, defined in~\eqref{eq:Z_naive}, as in Section~\ref{sec:Z_comp} \label{step:Zcomp}			\\
\nl (Extend with zeros $Q_k:=[Q_k \ 0]$ and $Q_{k-1}:=[Q_{k-1} \ 0 \ 0]$)
    \\ Compute $[\alpha, \beta, \gamma] = \vecc(Z)^T [ \vecc(Q_k), \vecc(Q_{k-1}), W ] $ as in Observation~\ref{obs:matrix_scalar_product}			\\
\nl $t_{k,k}=\alpha/\omega_k, t_{k-1,k}=\beta/\omega_{k-1}$	\\
\nl $W_{\perp}=W-t_{k,k} Q_k - t_{k-1,k} Q_{k-1}$		\\
\nl $t_{k+1,k}=\| W_{\perp} \|_{F}$	
\\
\nl $Q_{k+1}=W_{\perp}/t_{k+1,k}$
\\
\nl $\omega_{k+1}=(\gamma-2 t_{k,k} \alpha - 2 t_{k-1,k} \beta + t_{k,k}^2 \omega_k + t_{k-1,k}^2 \omega_{k-1} )/t_{k+1,k}^2$		\\
}
\nl extract eigenpair approximations as in Section~\ref{sec:eigenpair_extract}
\end{algorithm}
Algorithm~\ref{alg:InfIndefLanczosReduced} is the combination of the results presented in this section. More precisely, Algorithm~\ref{alg:InfIndefLanczosReduced} is the reformulation of Algorithm~\ref{alg:IndefLanczosReduced}, applied to \eqref{eq:sym_truncated},  where the nonzero blocks of the vectors $q_k, q_{k-1}, w$, and the needed blocks of $z$, are stored as columns of the matrices $Q_k, Q_{k-1}, W$ and $Z$. Moreover, the size of the linearization~\eqref{eq:sym_truncated} is implicitly expanded at each iteration. Observe that at iteration $k$ only the first $2k+1$ derivatives $f^{(j)}_m(0)$ for $m=1, \dots , p$ and $j=1, \dots ,2k+1$ are needed. 
We now conclude this section by showing the equivalence between Algorithm~\ref{alg:IndefLanczosReduced}, directly applied to the infinite dimensional problem~\eqref{eq:symm_lin_eig}, and Algorithm~\ref{alg:InfIndefLanczosReduced}.

\begin{thm}[Infinite dimensional equivalence] \label{thm:inf_dim_equiv}
Assume that the matrix $\SSS$, given in~\eqref{eq:symmetrizer}, is invertible and let $\qqq_1$ be an infinite length vector with only the first block $q_1$ nonzero. Then, Algorithm~\ref{alg:IndefLanczosReduced}, with stating vector $\qqq_1$, is applicable to~\eqref{eq:symm_lin_eig} and the matrices $Q_k$, that have as columns the first $k$ nonzero blocks of $\bm{q}_k$, $\underbar T_k$, and $\omega_{k}$ are equal to the homonymous matrices generated by Algorithm~\ref{alg:InfIndefLanczosReduced} with starting matrix $Q_1=[q_1]$.
\end{thm}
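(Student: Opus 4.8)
The plan is to prove the statement by induction on the iteration index $k$, showing that each step of Algorithm~\ref{alg:IndefLanczosReduced} applied to~\eqref{eq:symm_lin_eig} with starting vector $\qqq_1$ reproduces exactly the corresponding step of Algorithm~\ref{alg:InfIndefLanczosReduced}, under the identification of the infinite-length vectors $\qqq_k,\www,\www_\perp$ (which will be shown to have only finitely many nonzero blocks) with the finite matrices $Q_k,W,W_\perp$ via the $\vecc$ operator. Two facts make this work. First, since $\SSS$ is invertible, the operator in Step~\ref{step:AinvB} for the problem~\eqref{eq:symm_lin_eig} is $(\SSS\AAA)^{-1}(\SSS\BBB)=\AAA^{-1}\BBB$, which is well defined on $\qqq_k$. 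Second, the proof of Theorem~\ref{thm:action_op} in fact establishes that $\vecc(W)=[\AAA]_N^{-1}[\BBB]_N\vecc([Q_k\ 0])$ for any truncation level $N\geq k+1$, and since $\AAA$ is block diagonal and $\BBB$ has only its first block row full together with a block sub-diagonal, the infinite operator $\AAA^{-1}\BBB$ maps a vector with $k$ nonzero blocks to one with exactly $k+1$ nonzero blocks, described by~\eqref{eq:W}--\eqref{eq:w1}. This is the only step of Algorithm~\ref{alg:IndefLanczosReduced} that creates fill-in; every remaining step forms affine combinations of, and inner products against, vectors already known to have at most $k+1$ nonzero blocks.

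For the base case $k=1$ one has $\qqq_1$ with first block $q_1$ and all other blocks zero, $Q_1=[q_1]$, and $\omega_1=\qqq_1^T(\SSS\BBB)\qqq_1=q_1^TM_1q_1$ because by~\eqref{eq:SB} the top-left block of $\SSS\BBB$ equals $M_1$; this is line~1 of Algorithm~\ref{alg:InfIndefLanczosReduced}. For the inductive step, assume that at the start of iteration $k$ the first $k$ blocks of $\qqq_k$ and the first $k-1$ blocks of $\qqq_{k-1}$ coincide with the columns of the matrices $Q_k$ and $Q_{k-1}$ produced by Algorithm~\ref{alg:InfIndefLanczosReduced}, and that all previously computed $t_{i,j}$ and $\omega_i$ agree (so in particular $\underbar T_k$ matches). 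Running one iteration of Algorithm~\ref{alg:IndefLanczosReduced}: Step~\ref{step:AinvB} produces $\www=\AAA^{-1}\BBB\qqq_k$, whose first $k+1$ blocks are the columns of $W=w_1e_1^T+Q_kD$ by Theorem~\ref{thm:action_op} — this is Step~\ref{step:W} of Algorithm~\ref{alg:InfIndefLanczosReduced}. Step~\ref{step:zcomp} produces $\zzz=(\SSS\BBB)\www$, which is a full vector in general, but only its first $k+1$ blocks are ever used, and by Theorem~\ref{thm:SB_action} these are the columns of $Z$ in~\eqref{eq:Z_naive} (Step~\ref{step:Zcomp}). In Step~\ref{step:orth_coeff} the inner products $\zzz^T\qqq_k$, $\zzz^T\qqq_{k-1}$, $\zzz^T\www$ depend only on the first $k+1$ blocks of $\zzz$ — after zero-padding $Q_k$ and $Q_{k-1}$ to $k+1$ columns, exactly as prescribed in Algorithm~\ref{alg:InfIndefLanczosReduced} — so by Observation~\ref{obs:matrix_scalar_product} they equal $\tilde e^T(Z\circ[Q_k\ 0])e$ and the two analogous expressions, which is the $[\alpha,\beta,\gamma]$ line of Algorithm~\ref{alg:InfIndefLanczosReduced}. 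The coefficients $t_{k,k}=\alpha/\omega_k$, $t_{k-1,k}=\beta/\omega_{k-1}$ and the scalar $\omega_{k+1}$ are then given by identical formulas in the two algorithms; $\www_\perp=\www-t_{k,k}\qqq_k-t_{k-1,k}\qqq_{k-1}$ again has at most $k+1$ nonzero blocks, forming the columns of $W_\perp$; and since $\www_\perp$ has finitely many nonzero blocks, $t_{k+1,k}=\|\www_\perp\|_2=\|W_\perp\|_F$. Hence $\qqq_{k+1}=\www_\perp/t_{k+1,k}$ has its first $k+1$ blocks equal to the columns of $Q_{k+1}=W_\perp/t_{k+1,k}$, which closes the induction; applicability also transfers, since the only divisions that can break down — by $\omega_k$ or by $t_{k+1,k}$ — vanish in one algorithm precisely when they vanish in the other.

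I expect the only genuine difficulty to be the rigorous handling of the infinite-dimensional objects: making precise that $(\SSS\AAA)^{-1}(\SSS\BBB)$ acts as the operator $\AAA^{-1}\BBB$ (this is exactly where invertibility of $\SSS$ — rather than of some finite truncation — is used), and keeping track of which blocks of the generally full vector $\zzz$ are actually consumed by the short recurrence, together with the zero-padding conventions needed to match the infinite vectors $\qqq_k,\qqq_{k-1},\www$ against finite matrices with different numbers of columns. Everything else reduces to checking that the remaining steps neither create fill-in nor differ algebraically between the two algorithms, which is immediate from their definitions.
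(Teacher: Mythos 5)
Your proposal is correct and follows essentially the same route as the paper's own proof: induction on the iteration count, using the invertibility of $\SSS$ to reduce $(\SSS\AAA)^{-1}(\SSS\BBB)$ to $\AAA^{-1}\BBB$, invoking Theorem~\ref{thm:action_op} and Theorem~\ref{thm:SB_action} to identify $W$ and the needed blocks of $\bm z$ with the quantities in Algorithm~\ref{alg:InfIndefLanczosReduced}, and closing with $\|M\|_F=\|\vecc(M)\|_2$. Your treatment of the base case and of the breakdown conditions is somewhat more explicit than the paper's, but the argument is the same.
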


\begin{proof}
We denote by $\bm{q}_1, \bm{q}_2, \dots, \bm{q}_{k}$ the infinite-length vectors generated by Algorithm~\ref{alg:IndefLanczosReduced}
and by $Q_1, Q_2, \dots, Q_k$ the matrices generated by Algorithm~\ref{alg:InfIndefLanczosReduced}.  
The proof is based on induction over the iteration count $k$. The result is trivial for $k=1$. Suppose the results holds for some $k$. In Step~\ref{step:AinvB} of Algorithm~\ref{alg:IndefLanczosReduced}, by using that $\SSS$ is invertible, we have 
\begin{align*}
 \bm{w} = (\SSS \AAA)^{-1} (\SSS \BBB) \bm{q}_k =  \AAA^{-1} \BBB \bm{q}_k.
\end{align*}
By using the induction hypothesis, $\bm{q}_k$ has $k$ nonzero blocks, corresponding to the column of the matrix $Q_k$ generated at the $(k-1)$-th iteration of Algorithm~\ref{alg:InfIndefLanczosReduced}. Because of the structure of the matrices~\eqref{eq:SA}~and~\eqref{eq:SB}, we get that $\bm{w}$ has only the first $k$ blocks which are nonzero, corresponding to the columns of the matrix $W$ that fulfills~\eqref{eq:W}. Therefore this matrix corresponds to the matrix computed in Step~\ref{step:W} of Algorithm~\ref{alg:InfIndefLanczosReduced}. In the Step~\ref{step:zcomp} of Algorithm~\ref{alg:IndefLanczosReduced} we compute 
$\bm z = \SSS \BBB \bm{w}$.
This vector is in general full. However, in the Step~\ref{step:orth_coeff} of Algorithm~\ref{alg:IndefLanczosReduced} the products 
$\bm{z}^T \bm{q}_k$, $\bm{z}^T \bm{q}_{k-1}$ and $\bm z^T \bm{w}$ are computed. By induction hypothesis, $\bm{q}_{k}$, $\bm{q}_{k-1}$ have respectively $k$ and $k-1$ nonzero blocks, corresponding to the columns of the matrices $Q_{k}$ and $Q_{k-1}$ generated by Algorithm~\ref{alg:InfIndefLanczosReduced}. Therefore, only the first $k+1$ blocks of $\bm{z}$ are required and they can be computed with the same reasoning of Theorem~\ref{thm:SB_action}. More precisely, the first $k+1$ blocks of $\bm{z}$ are the columns of the matrix $Z$ that fulfills~\eqref{eq:Z_naive}. Therefore this matrix coincides with the matrix generated by Step~\ref{step:Zcomp} of Algorithm~\ref{alg:InfIndefLanczosReduced}. In order to conclude that $\bm{q}_{k+1}$ has only we first $k+1$ nonzero blocks, corresponding to the columns of the matrix $Q_{k+1}$ generated by Algorithm~\ref{alg:InfIndefLanczosReduced}, we only need to use the property $\| M \|_F = \| \vecc(M) \|_2$ for every matrix $M$.

\end{proof}

\subsection{Robust extraction of eigenpair approximations} \label{sec:eigenpair_extract}
We propose to enhance Algorithm~\ref{alg:InfIndefLanczosReduced} as follows. We consider the projected NEP 
\begin{align} \label{eq:nep_projected}
V^T M(\lambda) V z = 0,
\end{align}
where $V \in \CC^{n \times k}$ is an orthogonal matrix. Under the assumption that $V$ posses good approximation properties, eigenpair approximations to the NEP~\eqref{eq:nep} are given by $(\lambda, Vz)$. This can be seen as the Galerkin projection method that uses the range of $V$ as projection space. This technique for extracting eigenpair approximations, called \emph{nonlinear Rayleigh–Ritz procedure} or \emph{subspace acceleration}, is often used to improve properties of more basic algorithms, e.g., the nonlinear Arnoldi method~\cite{voss2004arnoldi}, Jacobi–Davidson methods~\cite{effenberger2013robust,betcke2004jacobi}, infinite Arnoldi~\cite{jarlebring2010arnoldi}, block preconditioned harmonic projection methods~\cite{xue2018block}, and many more. 

In our framework, there is a natural choice for the projection space. The matrix $V$ is chosen as the orthogonal matrix whose columns span the subspace of vectors obtained by extracting the first column from $Q_1, \dots, Q_k$ generated by Algorithm~\ref{alg:InfIndefLanczosReduced}. The reason this matrix contains good approximation properties is due to the following argument. In the Ritz pairs extraction described in Section~\ref{sec:LanFin}, the eigenvector approximations of~\eqref{eq:linNEP} (or equivalently of~\eqref{eq:symm_lin_eig}) are given by the first block row of the Ritz vectors~\eqref{eq:RitzPairsFinite}. Thus, the eigenvector approximations to the NEP~\eqref{eq:nep} are also obtained by the first block of these Ritz vectors and thus by the first block row of the Krylov basis, namely by the columns of the proposed matrix $V$. 

The projected problem~\eqref{eq:nep_projected} has size $k$ equal to the number of iterations, which is typically, in the context of Krylov methods, a small number, i.e., $k \ll n$. Therefore, the projected problem~\eqref{eq:nep_projected} has small size and solving~\eqref{eq:nep_projected} is not the computationally dominating part of Algorithm~\ref{alg:InfIndefLanczosReduced}. In the numerical experiments we have tested, for solving~\eqref{eq:nep_projected}, the following methods: Beyn's contour integral method~\cite{beyn2012integral}, NLEIGS~\cite{guttel2014nleigs} and IAR~\cite{Jarlebring_INFARN_2012}. The choice the method for  solving~\eqref{eq:nep_projected} depends on the features of the original problem~\eqref{eq:nep} and there is not a favorite candidate. For example, one may want to exploit that the projected problem~\eqref{eq:nep_projected} is defined by the same nonlinear functions and may inherit several features of the original NEP~\ref{eq:nep} such as being symmetric or palindromic. Therefore, this problem can be solved in a more robust way with structure preserving methods.

\section{Indefinite scalar product computation} \label{sec:Z_comp}
Under the assumption that the linear systems with the matrix $M_0$ can be efficiently solved, e.g., exploiting the sparsity, the dominating part of Algorithm~\ref{alg:InfIndefLanczosReduced} is the Step~\ref{step:Zcomp}, namely the computation of~\eqref{eq:Z_naive}, which has complexity~$\OOO(k^2n)$. In this section we derive efficient methods for computing this quantity.

\subsection{Computation of Step~\ref{step:Zcomp}: General case}

The following theorem provides an effective approximation to~\eqref{eq:Z_naive} without any specific assumption on the coefficients of~\eqref{eq:spmf}.
\begin{thm}
 Let $U,V \in \RR^{n \times q}$ the factors of the best rank $q$ approximation, with respect to the  Euclidean norm, 
 to the matrix $G_{k+1}$. Then
 \begin{align} \label{eq:Z_general_approx}
 \tilde Z = \sum_{m=1}^p A_m \sum_{j=1}^q W \diag(u_j) F_m \diag(v_j)
\end{align}
 is such that 
  \begin{align} \label{eq:Z_approx_error}
   \| Z - \tilde Z \|_F \le 
  \left( \sum_{m=1}^p \| A_m W\|_F \|F_m \|_F   \right)  \sum_{j=q+1}^k \sigma_j(G_k).
  \end{align}
\end{thm}

\begin{proof}
The approximation \eqref{eq:Z_general_approx} is obtained by replacing $G_{k+1}$ with $U V^T$ in \eqref{eq:Z_naive} and using $u_j v_j^T \circ F_m = \diag(u_j) F_m \diag(v_j)$. The equation~\eqref{eq:Z_approx_error} follows by the triangular inequality and by the fact that the Frobenius norm is sub-multiplicative with respect to the Hadamard product.
\end{proof}

The approximation~\eqref{eq:Z_general_approx} is effective, with $q$ small, since the matrix $G_k$, which is problem independent, has a fast decay in the singular values. In Figure~\ref{fig:G_svd} the singular values\footnote{The singular values are computed in \texttt{BigFloat} arithmetic using the package \texttt{GenericSVD}.} of this matrix are displayed for different sizes $k$. Moreover, the computation of~\eqref{eq:Z_general_approx}, requires less computation time than~\eqref{eq:Z_naive} since the products with the Hankel matrices $F_j$ can be efficiently computed with FFTs~\cite[Section 4]{luk2000fast}. The complexity for computing~\eqref{eq:Z_general_approx} is $\OOO(n k \log k)$.

\begin{figure}
\centering
\includegraphics{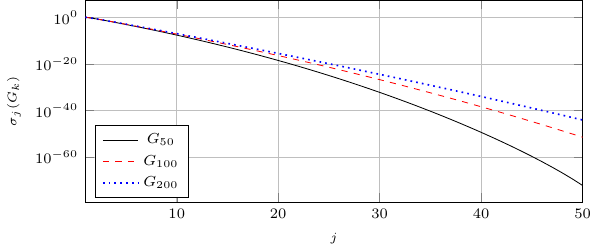}
\caption{Singular values decay of the matrices $G_k$ for $k=50,100,200$.} 
\label{fig:G_svd}
\end{figure}

\subsection{Computation of Step~\ref{step:Zcomp}: delay eigenvalue problem}
The stability analysis of delay systems of the form
\begin{align} \label{eq:DDE_diff}
 \dot x(t) = A_2 x(t) + \sum_{m=3}^p A_m x(t- \tau_m)
\end{align}
is related to solving NEPs, referred to as \emph{delay eigenvalue problems}, see~\cite[Ch. 2]{Jarlebring:2008:THESIS} and~\cite[Ch. 1 Sect. 1.2]{michiels2007stability}, defined as follows: 
\begin{align}\label{eq:DEP}
 M(\lambda)  = - \lambda I + A_2 + \sum_{m=3}^p A_m e^{-\lambda \tau_m}.
\end{align}
In this case the matrices $F_m$ have at most rank one. More precisely, a direct computation leads to $F_1=-e_1 e_1^T$, $F_2=0$ and for $m \geq 3$ we get $F_m=-\tau_m v v^T$ with $v_j=(-\tau_m)^{j-1}$.
Therefore, the computation time of~\eqref{eq:Z_general_approx} is much lower than~\eqref{eq:Z_naive} since it involves only products with low rank matrices. The complexity for computing of~\eqref{eq:Z_general_approx} is reduced, by exploiting the low-rank structure, to~$\OOO(n p k)$.

\subsection{Computation of Step~\ref{step:Zcomp}: polynomial plus low-rank structured NEPs}
In certain applications \eqref{eq:spmf} can be written as sum of a polynomial and a low-rank part. See, e.g., \cite[Ch. 2 Sec. 4]{effenberger2013robust}, \cite[Ch. 1 Sec. 1.2]{van2015rational}, \cite[\texttt{gun} problem]{betcke2013nlevp} and \cite[Sec. 6.2.2]{betcke2017restarting}. More precisely:
\begin{align} \label{eq:spmf_poly_lr}
 M(\lambda) = \sum_{m=1}^d \lambda^{m-1} A_m  + \sum_{m=d+1}^p f_m(\lambda) U_m U_m^T 
\end{align}
with $U_m, U_m \in \CC^{n \times r_m} $ and $r_m \ll n$. In this case we split \eqref{eq:Z_naive} in the polynomial and low-rank terms, namely $Z=Z_p+Z_{lr}$ with 
\begin{subequations} \label{eq:Z_poly_low_rank}
\begin{align} 
  Z_p := \sum_{m=1}^d A_m \left[ W (G_{k+1} \circ F_m) \right], \label{eq:Zp} \\
  Z_{lr} :=  \sum_{m=d+1}^p \left[ U_m (U_m^T W) \right] (G_{k+1} \circ F_m). \label{eq:Zlr}
\end{align}
\end{subequations}
The term~\eqref{eq:Zp} can be efficiently approximated as in~\eqref{eq:Z_general_approx} by exploiting the low-rank structure of the matrices $F_m$. Moreover, since $U_m U_m^T$ in~\eqref{eq:Zlr} have low rank, the computation of $Z$ with \eqref{eq:Z_poly_low_rank}, respecting the order given by the parentheses in~\eqref{eq:Zlr}, requires less computation time than~\eqref{eq:Z_naive}. The complexity of~\eqref{eq:Z_poly_low_rank} is $\OOO((d+r)n)$ where $r=\displaystyle \max_{d+1 \le t \le p} r_m$.

\section{Numerical simulations} \label{sec:num_exp}

In the following numerical experiments\footnote{All simulations were carried out with Intel octa core i7-4770 CPU 3.40GHz and 24 GB RAM.} we use, as error measure, the relative error defined as follows 
\begin{align*}
\Err(\lambda,x) := \frac{ \| M(\lambda) x \|_2 }{ \sum_{m=1}^p| f_m(\lambda) | \| A_m \|_{\infty} \| x \|_2 }.
\end{align*}
An eigenpair approximation is marked as ``converged'' if $\Err(\lambda,x)<10^{-8}$. The software used in these simulations is implemented in the Julia programming language~\cite{bezanson2017julia}, and publicly available in the Julia package NEP-PACK~\cite{NEP-PACK} \footnote{The scripts reproducing several of the presented examples are directly available in the web-page: 
 \url{https://people.kth.se/~gmele/InfLan/}}.

\subsection{Delay eigenvalue problem} \label{sec:num_dep}
We consider the delay eigenvalue problem arising from the spatial discretization of the following partial delay differential equation
\begin{align} \label{eq:PDDE_num}
 & u_{t}(\xi,t) = -\Delta u(\xi,t) + a(\xi) u(\xi,t-1) \\ 
 & \xi=(\xi_1,\xi_2) \in [0, \pi]^2, \ t>0   \nonumber
\end{align}
where $a(\xi)=-\xi_1 \sin(\xi_1+\xi_2)$, resulting in a problem of the form~\eqref{eq:DDE_diff} where all the matrices are real and symmetric. The spatial domain is partitioned with a uniform equispaced grid with $N$ points in each direction. The Laplace operator is discretized by the 5-points stencil finite difference approximation, leading to a NEP of size $n=N^2$, cf.~\cite[Section 6.2.1]{betcke2017restarting}. More precisely, the NEP is defined as $M(\lambda)=-\lambda I + A_2 +  e^{-\lambda} A_3$ where $I \in \RR^{n \times n}$ and the other matrix coefficients are given by 
{\small
\begin{align*}
& D := \frac{1}{h^2}
 \begin{pmatrix}
      -2    & 1         &               &       \\
      1     &  \ddots   &    \ddots     &       \\
            &  \ddots   &              &   1    \\
            &           &    1         & -2     \\
     \end{pmatrix} \in \RR^{N \times N},
 && 
 \begin{array}{l}
   F := \vecc \left( \left[ a(\xi_i,\xi_j) \right]_{i,j=1}^N \right) \in \RR^{N \times N},  \\ \\
 \tilde I \in \RR^N,
 \end{array} 
 \\
& A_2:=D \otimes \tilde I + \tilde I \otimes D \in \RR^{n \times n},      && A_3 := \diag( F ) \in \RR^{n \times n} 
\end{align*}
}%
with $h:=\pi/(N-1)$ discretization step. We run $50$ iterations of Algorithm~\ref{alg:InfIndefLanczosReduced}. In Figure~\ref{fig:dep_spectrum} and Figure~\ref{fig:dep_err_hist} we illustrate the robustness of the strategy for extracting the eigenpair approximations presented in Section~\ref{sec:eigenpair_extract}. More precisely, we compare the standar approach for extracting the eigenpair approximations, which is based on the computation of the Ritz pairs, with the more robust approach consisting of solving the projected NEP. In Figure~\ref{fig:dep_spectrum} is displayed the spectrum and the converged eigenvalues, respectively computed with the Ritz and the projected NEP approach. In this example we solve the projected NEP with the Beyn contour integral method\footnote{The disk of interest is set with center in the origin and radius $4$
with $N=1000$ discretization points and $\mbox{tol}_{\mbox{\tiny res}}=\mbox{tol}_{\mbox{\tiny rank}}=10^{-8}$.}~\cite{beyn2012integral}. The error history is presented in Figure~\ref{fig:dep_err_hist}. As expected, the convergence of the Algorithm~\ref{alg:InfIndefLanczosReduced}, with the Ritz pair approximation, appear to be slower with respect to the convergence of the Algorithm~\ref{alg:InfIndefLanczosReduced} with the more robust eigenpair extraction based on solving the projected NEP. 
\begin{figure}[t]
\begin{minipage}[t]{0.46\textwidth}
\hspace{-0.4cm}
\includegraphics{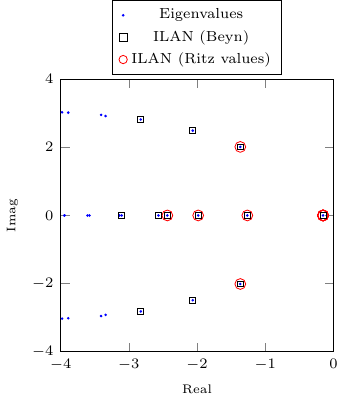}
\caption{Converged eigenvalues after $50$ iterations of Algorithm~\ref{alg:InfIndefLanczosReduced} applied to the NEP in Section~\ref{sec:num_dep}. The eigenvalues approximations are computed by extracting the Ritz pairs and by solving the projected NEP.}
\label{fig:dep_spectrum}
\end{minipage} \hspace{0.3cm}
\begin{minipage}[t]{0.46\textwidth}
\hspace{-0.4cm}
\includegraphics{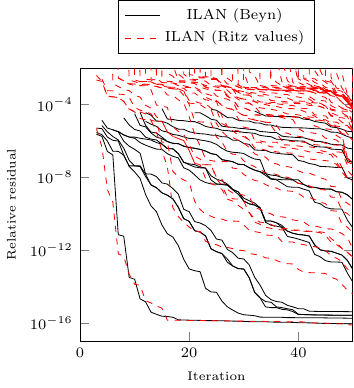}
\caption{Error history of Algorithm~\ref{alg:InfIndefLanczosReduced} for the NEP in Section~\ref{sec:num_dep}. In each iteration the eigenvalue approximations are computed by extracting the Ritz pairs and by solving the projected NEP. }
\label{fig:dep_err_hist}
\end{minipage}
\end{figure}

The performance of Algorithm~\ref{alg:InfIndefLanczosReduced} is affected by the method used for solving the projected NEP. In Table~\ref{tab:dep} we compare the time, and the number of computed eigenvalues, after $50$ iterations of Algorithm~\ref{alg:InfIndefLanczosReduced} combined with three different NEP solves for the projected problem: Beyn contour integral method (with the same settings as before), IAR~\cite{Jarlebring_INFARN_2012} ($50$ iterations)~\footnote{We perform $50$ iteration of IAR for solving the projected NEP and return the converged eigenvalues.} and IAR ($100$ iterations).

\setlength{\tabcolsep}{4.2pt}
\begin{table}[t!]
\begin{minipage}[t]{0.46\textwidth}
\begin{tabular}{cc|c|c|c|c|c|c|}  
\cline{3-8}
 &
 & \multicolumn{2}{|c|}{Beyn} 
 & \multicolumn{2}{|c|}{IAR: $50$ iter.} 
 & \multicolumn{2}{|c|}{IAR: $100$ iter.}   \\
 \hline
 \multicolumn{1}{|c}{prob. size} &
 & time 
 & conv. eig.
 & time 
 & conv. eig.
 & time 
 & conv. eig.                          \\
 \hline
 \multicolumn{1}{|c}{$10000$} &
 & 1.997 s
 & 11
 & 1.500 s
 & 9 
 & 2.570 s 
 & 13                            \\
  \hline
 \multicolumn{1}{|c}{$90000$} &
 & 14.191 s
 & 11
 & 13.120 s
 & 9 
 & 14.295 s
 & 19                            \\
  \hline
 \multicolumn{1}{|c}{$250000$} &
 & 36.177 s
 & 11
 & 35.496 s
 & 9 
 & 36.460 s  
 & 17                          \\
 \hline
\end{tabular}
\end{minipage}
\caption{Performance of Algorithm~\ref{alg:InfIndefLanczosReduced} applied to the NEP in Section~\ref{sec:num_dep} of different sizes. The NEPs are obtained by discretizing~\eqref{eq:PDDE_num} respectively with $N=100, 300, 500$ nodes in each direction. The projected problems are solved only at the last iteration with: Beyn contour integral method, IAR~\cite{Jarlebring_INFARN_2012} ($50$ iterations) and IAR ($100$ iterations).} 
\label{tab:dep}
\end{table}

\subsection{A benchmark problem} \label{sec:gun_num}
We now illustrate the performance of Algorithm~\ref{alg:InfIndefLanczosReduced} for solving a NEP that is symmetric but not Hermitian. We consider the \texttt{gun} problem that belong to the problem collection~\cite{betcke2013nlevp}. This NEP has the following form 
\begin{align} \label{eq:nep_gun}
 M(\lambda) = A_1 - \lambda A_2 + i \sqrt{\lambda} A_3  + i \sqrt{\lambda - \sigma_2^2} A_4  
\end{align}
where $\sigma_2 = 108.8774$. The matrices $A_j \in \RR^{9956 \times 9956}$, for $j=1, \dots, 4$, are real and symmetric. The NEP~\ref{eq:nep_gun} can be written in the form~\eqref{eq:spmf_poly_lr} since the matrix coefficients of the nonlinear part have low-rank, namely $\rk(A_3)=19$ and $\rk(A_4)=65$. The eigenvalues of interest are located inside the the closed disk centered in $250^2$ and with radius $5 \cdot 10^4$. Before applying Algorithm~\ref{alg:InfIndefLanczosReduced}, the problem is shifted and scaled. We set the parameters to $\lambda=\lambda_0 + \alpha \hat \lambda$ where $\lambda_0=300^2$ and $\alpha=(300-200)^2$. This problem has been solved with various 
methods~\cite{Jarlebring_INFARN_2012,guttel2014nleigs,van2015compact,lietaert2018compact,gaaf2017infinite} and, by numerical evidences, there are $21$ eigenvalues in the region of interest. For this problem we use IAR for solving the projected NEP. More precisely we test two variants: IAR (50 iterations) and IAR (200 iterations). As showed in the numerical experiment in Section~\ref{sec:num_dep}, the robustness of the whole Algorithm~\ref{alg:InfIndefLanczosReduced} is effected by the choice of the method used for solving the projected NEP. In Figure~\ref{fig:gun_spectrum} we can see that more eigenvalues converge when we solve more accurately the projected problem. The error history is presented in Figure~\ref{fig:gun_err_hist}. Solving more accurately the projected NEP is necessary to compute the outermost eigenvalues. 

\begin{figure}[t]
\begin{minipage}[t]{0.5\textwidth}
\hspace{-0.4cm}
\includegraphics{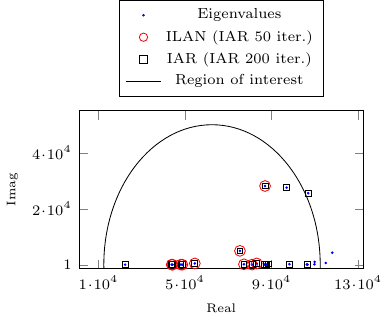}
\caption{Converged eigenvalues after $100$ iterations of Algorithm~\ref{alg:InfIndefLanczosReduced} applied to the NEP in Section~\ref{sec:gun_num}. The projected problem is solved with IAR (50 iterations) and IAR (200 iterations).}
\label{fig:gun_spectrum}
\end{minipage} \hspace{0.5cm}
\begin{minipage}[t]{0.45\textwidth}
\includegraphics{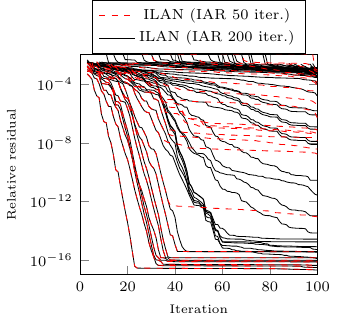}
\caption{Error history, for the NEP in Section~\ref{sec:gun_num}. In each iteration the projected problem is solved with IAR (50 iterations) and IAR (200 iterations).}
\label{fig:gun_err_hist}
\end{minipage}
\end{figure}

\subsection{A random symmetrized problem} \label{sec:symmetrized_nep}
In conclusion we illustrate how Algorithm~\ref{alg:InfIndefLanczosReduced} can be used for solving a nonsymmetric NEP. We introduce a symmetrization technique, consisting of doubling the problem size, based in the idea presented in~\cite[Sect. 5]{nour1989applications}. Namely, we define the symmetrized NEP as 
\begin{align}\label{eq:symmetrized_nep}
 \tilde M (\lambda) := 
 \begin{pmatrix}
  0                 &   M(\lambda)  \\
  M(\lambda)^T      &   0   
 \end{pmatrix} = 
 \sum_{m=1}^p f_m(\lambda) 
  \begin{pmatrix}
   0        &   A_m \\
   A_m^T    &   0
  \end{pmatrix}.
\end{align}
Observe that if $\left( \lambda, [y^T,x^T]^T \right)$ is an eigenpair of~\eqref{eq:symmetrized_nep}, then $(\lambda,x)$ is an eigenpair of $M(\lambda)$. We now consider the symmetrization, in the sense of~\eqref{eq:symmetrized_nep}, of the following NEP that is artificially constructed: 
\begin{align} \label{eq:unsymm_nep}
 M(\lambda) = A_1 - \lambda A_2 + \sin(\lambda) A_3 + e^{-\lambda} A_4
\end{align}
where $A_j \in \CC^{500 \times 500}$ are defined as follows: $A_1$ is the bidiagonal matrix with elements equal to $500$ in the upper and lower diagonal, $A_2$ is the identity matrix, $A_3=A_1/500$ and $A_4$ is a diagonal matrix with elements equal to $i$ (complex unit) in the lower diagonal. We perform $50$ iterations of Algorithm~\ref{alg:InfIndefLanczosReduced} and solve the projected NEP with NLEIGS~\footnote{We used the static variant with Leja-Bagby points automatically generated from the polygonal target. The shift is in zero and it is kept constant during the iterations.}~\cite{guttel2014nleigs} by targeting the eigenvalues contained in the rectangle with opposite vertices in $-1.5-1.5 i$ and $0.5+0.5 i$. In Figure~\ref{fig:symmetrized_nep_spectrum} is illustrated the spectrum and the converged eigenvalues, in the region of interest, after $50$ iterations of Algorithm~\ref{alg:InfIndefLanczosReduced}. The error history is illustrated in Figure~\ref{fig:symmetrized_nep_err_hist}.

\begin{figure}[t]
\begin{minipage}[t]{0.46\textwidth}
\hspace{-0.4cm}
\includegraphics{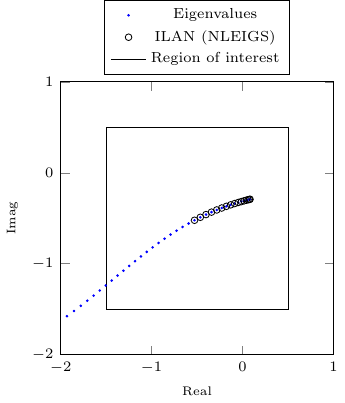}
\caption{Converged eigenvalues after $50$ iterations of Algorithm~\ref{alg:InfIndefLanczosReduced} applied to the NEP~\eqref{eq:symmetrized_nep} that is the symmetrization of~\eqref{eq:unsymm_nep}. The convergence is tested with respect to the original NEP~\eqref{eq:unsymm_nep}. The projected problem is solved with NLEIGS.}
\label{fig:symmetrized_nep_spectrum}
\end{minipage} \hspace{0.3cm}
\begin{minipage}[t]{0.46\textwidth}
\hspace{-0.4cm}
\includegraphics{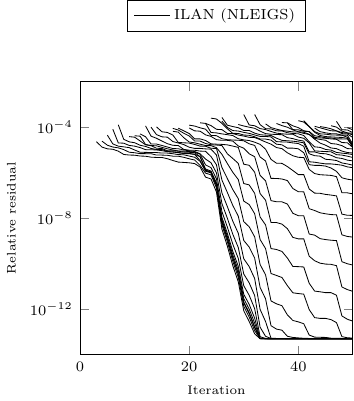}
\caption{Error history of Algorithm~\ref{alg:InfIndefLanczosReduced} applied to the  NEP~\eqref{eq:symmetrized_nep} that is the symmetrization of~\eqref{eq:unsymm_nep}. The error is computed with respect to the original NEP~\eqref{eq:unsymm_nep}. }
\label{fig:symmetrized_nep_err_hist}
\end{minipage}
\end{figure}

\section{Conclusions and outlook}
We have presented a method for solving symmetric NEPs. We have also illustrated how the problem structure, in particular the structure of the matrices and functions in~\eqref{eq:spmf}, can be exploited in order to reduce the computation time. However, there are NEPs that cannot be written in the format~\eqref{eq:spmf}, e.g., the waveguide eigevalue problem~\cite{jarlebring2017waveguide}, the reformulation of the Dirichlet eigenvalue problem with the boundary element method \cite{steinbach2009boundary,effenberger2012chebyshev}, etc. For some of these problems, only a routine for computing $M_k x$ is available. We believe that further research can potentially extend the applicability of Infinite Lanczos to such problems. 

In the numerical experiment in Section~\ref{sec:symmetrized_nep}, we have successfully solved a nonsymmetric NEP in the following way. Firstly we constructed a symmetric NEP~\eqref{eq:symmetrized_nep} whose eigenvalues are also eigenvalues of the original NEP. Then we applied the infinite Lanczos to the symmetrized problem~\eqref{eq:symmetrized_nep}. The matrices~\eqref{eq:symmetrized_nep} have clearly a very well defined block structure. We believe that infinite Lanczos can be further specialized for solving nonsymmetric NEPs by exploiting these structures. In conclusion we also believe that similar ideas can be extended to NEPs that are Hermitian, namely, $M(\lambda)^H=M(\bar \lambda)$ where $\bar \lambda$ represents the complex conjugate of $\lambda \in \CC$ and $M(\lambda)^H$ the Hermitian, or conjugate transpose, of the matrix $M(\lambda)$.

\section{Acknowledgement}
The author wishes to thank Sarah Gaaf (Utrecht University) who suggested a first approach to extend Theorem~\ref{thm:PEPsym} to the scaled companion linearization. The author also thanks Elias Jarlebring (KTH Royal Institute of Technology) who has provided feedback and help in the first stage of the project

\bibliographystyle{elsart-num-sort}      
\bibliography{manuscript}

\begin{thebibliography}{10}
\expandafter\ifx\csname url\endcsname\relax
  \def\url#1{\texttt{#1}}\fi
\expandafter\ifx\csname urlprefix\endcsname\relax\def\urlprefix{URL }\fi

\bibitem{abdel2010deflated}
A.~M. Abdel-Rehim, R.~B. Morgan, D.~A. Nicely, W.~Wilcox, Deflated and
  restarted symmetric {Lanczos} methods for eigenvalues and linear equations
  with multiple right-hand sides, SIAM J. Sci. Comput. 32~(1) (2010) 129--149.

\bibitem{amiraslani2008linearization}
A.~Amiraslani, R.~M. Corless, P.~Lancaster, Linearization of matrix polynomials
  expressed in polynomial bases, IMA J. Numer. Anal. 29~(1) (2008) 141--157.

\bibitem{antoniou2004new}
E.~N. Antoniou, S.~Vologiannidis, A new family of companion forms of polynomial
  matrices, Electron. J. Linear Algebra 11~(411) (2004) 78--87.

\bibitem{bai2000templates}
Z.~Bai, J.~Demmel, J.~Dongarra, A.~Ruhe, H.~{van der Vorst}, Templates for the
  solution of algebraic eigenvalue problems: a practical guide, vol.~11, Siam,
  2000.

\bibitem{betcke2017restarting}
M.~Betcke, H.~Voss, Restarting iterative projection methods for {Hermitian}
  nonlinear eigenvalue problems with minmax property, Numer. Math. 135~(2)
  (2017) 397--430.

\bibitem{betcke2013nlevp}
T.~Betcke, N.~J. Higham, V.~Mehrmann, C.~Schr{\"o}der, F.~Tisseur, {NLEVP}: A
  collection of nonlinear eigenvalue problems, ACM Trans. Math. Softw. 39~(2)
  (2013) 7.

\bibitem{betcke2004jacobi}
T.~Betcke, H.~Voss, A jacobi--davidson-type projection method for nonlinear
  eigenvalue problems, Future Generation Computer Systems 20~(3) (2004)
  363--372.

\bibitem{beyn2012integral}
W.-J. Beyn, An integral method for solving nonlinear eigenvalue problems,
  Linear Algebra Its Appl. 436~(10) (2012) 3839--3863.

\bibitem{bezanson2017julia}
J.~Bezanson, A.~Edelman, S.~Karpinski, V.~B. Shah, Julia: A fresh approach to
  numerical computing, SIAM Rev. 59~(1) (2017) 65--98.

\bibitem{bueno2014structured}
M.~Bueno, K.~Curlett, S.~Furtado, Structured strong linearizations from
  {Fiedler} pencils with repetition i, Linear Algebra Its Appl. 460 (2014)
  51--80.

\bibitem{bueno2018block}
M.~Bueno, F.~Dopico, S.~Furtado, L.~Medina, A block-symmetric linearization of
  odd degree matrix polynomials with optimal eigenvalue condition number and
  backward error, Calcolo 55~(3) (2018) 32.

\bibitem{bueno2014structured2}
M.~Bueno, S.~Furtado, Structured strong linearizations from {Fiedler} pencils
  with repetition ii, Linear Algebra Its Appl. 463 (2014) 282--321.

\bibitem{de2010fiedler}
F.~De~Ter{\'a}n, F.~M. Dopico, D.~S. Mackey, {Fiedler} companion linearizations
  and the recovery of minimal indices, SIAM J. Matrix Anal. Appl. 31~(4) (2010)
  2181--2204.

\bibitem{effenberger2013robust}
C.~Effenberger, Robust successive computation of eigenpairs for nonlinear
  eigenvalue problems, SIAM J. Matrix Anal. Appl. 34~(3) (2013) 1231--1256.

\bibitem{effenberger2012chebyshev}
C.~Effenberger, D.~Kressner, Chebyshev interpolation for nonlinear eigenvalue
  problems, BIT Numer. Math. 52~(4) (2012) 933--951.

\bibitem{fassbender2018block}
H.~Fa{\ss}bender, P.~Saltenberger, Block {Kronecker} ansatz spaces for matrix
  polynomials, Linear Algebra Its Appl. 542 (2018) 118--148.

\bibitem{gaaf2017infinite}
S.~W. Gaaf, E.~Jarlebring, The infinite bi-{Lanczos} method for nonlinear
  eigenvalue problems, SIAM J. Sci. Comput. 39~(5) (2017) S898--S919.

\bibitem{gohberg2005matrix}
I.~Gohberg, P.~Lancaster, L.~Rodman, Matrix polynomials, Springer, 2005.

\bibitem{guttel2017nonlinear}
S.~G{\"u}ttel, F.~Tisseur, The nonlinear eigenvalue problem, Acta Numer. 26
  (2017) 1--94.

\bibitem{guttel2014nleigs}
S.~G{\"u}ttel, R.~Van~Beeumen, K.~Meerbergen, W.~Michiels, Nleigs: A class of
  fully rational {Krylov} methods for nonlinear eigenvalue problems, SIAM J.
  Sci. Comput. 36~(6) (2014) A2842--A2864.

\bibitem{Hernandez:2005:SSF}
V.~Hernandez, J.~E. Roman, V.~Vidal, {SLEPc}: A scalable and flexible toolkit
  for the solution of eigenvalue problems, {ACM} Trans. Math. Software 31~(3)
  (2005) 351--362.

\bibitem{higham2006symmetric}
N.~J. Higham, D.~S. Mackey, N.~Mackey, F.~Tisseur, Symmetric linearizations for
  matrix polynomials, SIAM J. Matrix Anal. Appl. 29~(1) (2006) 143--159.

\bibitem{Jarlebring:2008:THESIS}
E.~Jarlebring, The spectrum of delay-differential equations: numerical methods,
  stability and perturbation, Ph.D. thesis, Inst. Comp. Math, TU Braunschweig
  (2008).

\bibitem{NEP-PACK}
E.~Jarlebring, M.~Bennedich, G.~Mele, E.~Ringh, P.~Upadhyaya, {NEP-PACK}: A
  {Julia} package for nonlinear eigenvalue problems,
  {https://github.com/nep-pack} (2018).

\bibitem{jarlebring2010arnoldi}
E.~Jarlebring, K.~Meerbergen, W.~Michiels, An {Arnoldi} method with structured
  starting vectors for the delay eigenvalue problem, IFAC Proceedings Volumes
  43~(2) (2010) 57--62.

\bibitem{jarlebring2017waveguide}
E.~Jarlebring, G.~Mele, O.~Runborg, The waveguide eigenvalue problem and the
  tensor infinite {Arnoldi} method, SIAM J. Sci. Comput. 39~(3) (2017)
  A1062--A1088.

\bibitem{Jarlebring_INFARN_2012}
E.~Jarlebring, W.~Michiels, K.~Meerbergen, A linear eigenvalue algorithm for
  the nonlinear eigenvalue problem, Numer. Math. 122~(1) (2012) 169--195.

\bibitem{lancaster1966lambda}
P.~Lancaster, Lambda-matrices and vibrating systems, Pergamon Press, Oxford,
  1966.

\bibitem{Lanczos:1950zz}
C.~{Lanczos}, {An iteration method for the solution of the eigenvalue problem
  of linear differential and integral operators}, J. Res. Natl. Bur. Stand. B
  45 (1950) 255--282.

\bibitem{lawrence2016backward}
P.~W. Lawrence, M.~Van~Barel, P.~Van~Dooren, Backward error analysis of
  polynomial eigenvalue problems solved by linearization, SIAM J. Matrix Anal.
  Appl. 37~(1) (2016) 123--144.

\bibitem{lietaert2018compact}
P.~Lietaert, K.~Meerbergen, F.~Tisseur, Compact two-sided {Krylov} methods for
  nonlinear eigenvalue problems, SIAM J. Sci. Comput. 40~(5) (2018)
  A2801--A2829.

\bibitem{luk2000fast}
F.~T. Luk, S.~Qiao, A fast eigenvalue algorithm for {Hankel} matrices, Linear
  Algebra Its Appl. 316~(1-3) (2000) 171--182.

\bibitem{mackey2006vector}
D.~S. Mackey, N.~Mackey, C.~Mehl, V.~Mehrmann, Vector spaces of linearizations
  for matrix polynomials, SIAM J. Matrix Anal. Appl. 28~(4) (2006) 971--1004.

\bibitem{mackey2009numerical}
D.~S. Mackey, N.~Mackey, C.~Mehl, V.~Mehrmann, Numerical methods for
  palindromic eigenvalue problems: Computing the anti-triangular {Schur} form,
  Numer. Linear Algebra Appl. 16~(1) (2009) 63--86.

\bibitem{mackey2006palindromic}
D.~S. Mackey, N.~Mackey, C.~Mehl, V.~Mehrmanns, Palindromic polynomial
  eigenvalue problems: Good vibrations from good linearizations, Tech. rep.,
  DFG Research Center Matheon, ``Mathematics for key technologies'' in Berlin,
  TU Berlin, Berlin, Germany, available online at http://www.matheon.de/
  (2005).

\bibitem{mackey2006structured}
D.~S. Mackey, N.~Mackey, C.~Mehl, V.~Mehrmanns, Structured polynomial
  eigenvalue problems: Good vibrations from good linearizations, SIAM J. Matrix
  Anal. Appl. 28~(4) (2006) 1029--1051.

\bibitem{mehrmann2004nonlinear}
V.~Mehrmann, H.~Voss, Nonlinear eigenvalue problems: A challenge for modern
  eigenvalue methods, {GAMM-Mitteilungen} 27~(2) (2004) 121--152.

\bibitem{mehrmann2001structure}
V.~Mehrmann, D.~Watkins, Structure-preserving methods for computing eigenpairs
  of large sparse skew-{Hamiltonian}/{Hamiltonian} pencils, SIAM J. Sci.
  Comput. 22~(6) (2001) 1905--1925.

\bibitem{mehrmann2002polynomial}
V.~Mehrmann, D.~Watkins, Polynomial eigenvalue problems with {Hamiltonian}
  structure, Electron. trans. numer. anal. 13 (2002) 106--118.

\bibitem{michiels2007stability}
W.~Michiels, S.-I. Niculescu, Stability and stabilization of time-delay
  systems: an eigenvalue-based approach, SIAM, 2007.

\bibitem{nour1989applications}
B.~Nour-Omid, Applications of the {Lanczos} method, Comput. Phys. Commun.
  53~(1-3) (1989) 157--168.

\bibitem{parlett1998symmetric}
B.~N. Parlett, The symmetric eigenvalue problem, vol.~20, siam, 1998.

\bibitem{parlett1990use}
B.~N. Parlett, H.-C. Chen, Use of indefinite pencils for computing damped
  natural modes, Linear Algebra Its Appl. 140 (1990) 53--88.

\bibitem{parlett1979lanczos}
B.~N. Parlett, D.~S. Scott, The {Lanczos} algorithm with selective
  orthogonalization, Math. Comp. 33~(145) (1979) 217--238.

\bibitem{robol2017framework}
L.~Robol, R.~Vandebril, P.~V. Dooren, A framework for structured linearizations
  of matrix polynomials in various bases, SIAM J. Matrix Anal. Appl 38~(1)
  (2017) 188--216.

\bibitem{simon1984lanczos}
H.~D. Simon, The {Lanczos} algorithm with partial reorthogonalization, Math.
  Comp. 42~(165) (1984) 115--142.

\bibitem{simoncini2016computational}
V.~Simoncini, Computational methods for linear matrix equations, SIAM Rev.
  58~(3) (2016) 377--441.

\bibitem{steinbach2009boundary}
O.~Steinbach, G.~Unger, A boundary element method for the {Dirichlet}
  eigenvalue problem of the {Laplace} operator, Numer. Math. 113~(2) (2009)
  281--298.

\bibitem{su2011solving}
Y.~Su, Z.~Bai, Solving rational eigenvalue problems via linearization, SIAM J.
  Matrix Anal. Appl. 32~(1) (2011) 201--216.

\bibitem{van2015rational}
R.~Van~Beeumen, Rational {Krylov} methods for nonlinear eigenvalue problems,
  Ph.D. thesis, KU Leuven (2015).

\bibitem{van2015compact}
R.~Van~Beeumen, K.~Meerbergen, W.~Michiels, Compact rational {Krylov} methods
  for nonlinear eigenvalue problems, SIAM J. Matrix Anal. Appl. 36~(2) (2015)
  820--838.

\bibitem{voss2004arnoldi}
H.~Voss, An arnoldi method for nonlinear eigenvalue problems, BIT numerical
  mathematics 44~(2) (2004) 387--401.

\bibitem{wu2000thick}
K.~Wu, H.~Simon, Thick-restart {Lanczos} method for large symmetric eigenvalue
  problems, SIAM J. Matrix Anal. Appl. 22~(2) (2000) 602--616.

\bibitem{xue2018block}
F.~Xue, A block preconditioned harmonic projection method for large-scale
  nonlinear eigenvalue problems, SIAM J. Sci. Comput. 40~(3) (2018)
  A1809--A1835.

\end{thebibliography}
\end{document}